\newtheorem{thr}{Theorem}
\newtheorem{con}[thr]{Conjecture}
\newtheorem{claim}[thr]{Claim}
\theoremstyle{definition}
\theoremstyle{remark}
\numberwithin{equation}{section}
\def\un{{(1)}}
\def\du{{(0)}}
\def\G{\mathcal{G}}
\def\S{\mathcal{S}}
\def\ep{\varepsilon}
\def\adj{\operatorname{adj}}
\begin{document}

\title[On the characteristic polynomial of a supertropical adjoint matrix]{On the characteristic polynomial \\ of a supertropical adjoint matrix}

\author{Yaroslav Shitov}
\address{National Research University Higher School of Economics, 20 Myasnitskaya Ulitsa, Moscow 101000, Russia}
\email{yaroslav-shitov@yandex.ru}

\subjclass[2000]{15A80}
\keywords{tropical algebra, matrix theory}

\begin{abstract}
Let $\chi(A)$ denote the characteristic polynomial of a matrix $A$ over a field;
a standard result of linear algebra states that $\chi(A^{-1})$ is the reciprocal
polynomial of $\chi(A)$. More formally, the condition $\chi^n(X) \chi^k(X^{-1})=\chi^{n-k}(X)$
holds for any invertible $n\times n$ matrix $X$ over a field, where $\chi^i(X)$ denotes
the coefficient of $\lambda^{n-i}$ in the characteristic polynomial $\det(\lambda I-X)$.
We confirm a recent conjecture of Niv by proving the tropical analogue of this result.
\end{abstract}

\maketitle

The supertropical semifield is a relatively new concept arisen as a tool for studying problems
of tropical mathematics~\cite{IRIR2}. The supertropical theory is now a developed branch of algebra,
and we refer the reader to~\cite{IRIR} for a survey of basics and applications.
Our arguments make use of some other structures including fields and polynomial rings over them, so it will
be convenient for us to work with slightly unusual equivalent description of supertropical semifield.
In particular, we will denote the operations by $\oplus$ and $\odot$ to avoid confusion with standard
operations $+$ and $\cdot$ over a field. For the same reason, we will use the notation $u^{\odot i}$ in supertropical
setting while $u^i$ will denote a power of element of a field. Similarly, we will denote the supertropical determinant
by $\det_\circ$ reserving the notation $\det$ for usual determinant over a field.
Let us recall the definitions of concepts mentioned above.

Let $(\G,\ast,0,\leq)$ be an ordered Abelian group, and $\G^\du$ and $\G^\un$ be two copies of $\G$.
We consider the semiring $\S= \G^\du\cup \G^\un \cup\{\ep\}$ with two commutative operations,
denoted by $\oplus$ and $\odot$. Assume $i,j\in\{0,1\}$, $s\in\S$, $a,b\in \G$, and let $a<b$; the operations
are defined by $\ep\oplus s=s$, $\ep\odot s=\ep$, $a^{(i)}\oplus b^{(j)}=b^{(j)}$, $b^{(i)}\oplus b^{(j)}=b^\du$,
$a^{(i)}\odot b^{(j)}=(a\ast b)^{(ij)}$. One can note that $\S$ is isomorphic to \textit{supertropical semifield}, and
the elements from $\G^\du$ and $\G^\un$ correspond to \textit{ghost} and \textit{tangible} elements,
respectively. We define the mapping $\nu$ sending $a^{(i)}$ to $a\in\G$ and $\varepsilon$ to $\varepsilon$;
we say that $c,d\in\S$ are $\nu$\textit{-equivalent} whenever $\nu(c)=\nu(d)$, and we write $c\approx_\nu d$ in this case.
Also, we write $c\models d$ if either $c=d$ or $c=d+g$,
for some ghost element $g$; this relation is known as \textit{ghost surpassing} relation,
which is one of fundamental concepts replacing equality in many theorems taken from
classical algebra~\cite{IRIR}. By $u^{\odot i}$ we denote the $i$th supertropical
power of $u$, that is, the result of multiplying $u$ by itself $i$ times. Let $A=(a_{ij})$ be a supertropical matrix;
its determinant is
$$\operatorname{det}_{\circ} A=\bigoplus_{\sigma\in S_n}a_{1\sigma(1)}\odot\ldots\odot a_{n\sigma(n)},$$
where $S_n$ denotes a symmetric group on $\{1,\ldots,n\}$.
The matrix $A$ is said to be \textit{non-singular} if $\det_\circ A$ is tangible; equivalently,
$A$ is non-singular if $\det_\circ A$ has a multiplicative inverse in $\S$.
The $(i,j)$th \textit{cofactor} of $A$ is the supertropical determinant
of the matrix obtained from $A$ by removing $i$th row and $j$th column.
By $\operatorname{adj}_\circ A$ we denote the \textit{adjoint} of $A$, that is,
the $n\times n$ matrix whose $(i,j)$th entry equals the $(j,i)$th cofactor.
By $\chi^k_\circ(A)$ we denote the supertropical sum of all principal $k\times k$ minors of $A$, that is,
the coefficient of $\lambda^{\odot(n-k)}$ in the \textit{characteristic polynomial} $\det_\circ(A\oplus\lambda\odot I_\circ)$.
Note that $\varepsilon$ and $0^\un$ are neutral elements with respect to $\oplus$ and $\odot$, respectively; therefore, the
\textit{supertropical identity matrix} $I_\circ$ has elements $0^\un$ on diagonal and $\varepsilon$'s everywhere else.
The following has been an open problem. 

%

\begin{con}\cite[Conjecture~6.2]{Niv}\label{thrConjNiv}
Let $A\in\S^{n\times n}$ be a non-singular matrix. Then, $\chi^k_\circ(\operatorname{adj}_\circ A)\models\left(\det_\circ A \right)^{\odot(k-1)}\odot  \chi^{n-k}_\circ(A)$ holds for all $k\in\{0,\ldots,n\}$.
\end{con}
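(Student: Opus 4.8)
The plan is to deduce the statement from a classical identity over a field by means of a valuation. I would begin by recording the classical fact that the conjecture tropicalizes: for an invertible matrix $X$ over any field $K$, the relations $\adj X=(\det X)X^{-1}$, $e_k(cX)=c^{k}e_k(X)$ and $(\det X)\,e_k(X^{-1})=e_{n-k}(X)$ give $e_k(\adj X)=(\det X)^{k-1}e_{n-k}(X)$, where $e_i(Y)$ denotes the sum of the principal $i\times i$ minors of $Y$. For $k\ge1$ both sides are polynomials in the entries of $X$ agreeing on the Zariski-dense set of invertible matrices, so this is an identity in $\mathbb{Z}[x_{ij}]$, and the case $k=0$ is trivial (both sides of the conjecture are $0^\un$). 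Note that $\chi^k_\circ(B)$, being the supertropical sum of the principal minors of $B$, is exactly the expression obtained from the polynomial $e_k(Y)$ by replacing $+,\cdot$ with $\oplus,\odot$ and discarding signs.

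I would then fix a valued field $K$ with value group $\G$ and residue field $\mathbb{C}$ (say a suitable Hahn series field, with sign conventions chosen so that $\oplus$ models the valuation of a sum), and call $\tilde A\in K^{n\times n}$ a \emph{lift} of a supertropical matrix $A$ if $\tilde A_{ij}=0\iff A_{ij}=\ep$, $v(\tilde A_{ij})=\nu(A_{ij})$ when $A_{ij}$ is tangible, and $v(\tilde A_{ij})\le\nu(A_{ij})$ when $A_{ij}$ is ghost; a lift is \emph{generic} if every nonzero $\tilde A_{ij}$ is $u_{ij}t^{\nu(A_{ij})}$ with the $u_{ij}$ algebraically independent over $\mathbb{Q}$. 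The workhorse is a transfer lemma, proved by induction on the way a $\oplus/\odot$-expression is built up: if $E^\circ$ is a supertropical expression in the entries of $A$ and $E$ is the matching classical expression (with $+,\cdot$ and $\pm1$ coefficients), then $E^\circ(A)\models v(E(\tilde A))^\un$ for \emph{every} lift $\tilde A$ (with $v(0)^\un$ read as $\ep$). Moreover, since the leading coefficient of a single sum of minors is a nonzero polynomial in independent transcendentals, a generic lift satisfies $v(\det\tilde A)=\nu(\det_\circ A)$ and $v(e_{n-k}(\tilde A))=\nu(\chi^{n-k}_\circ(A))$; when $\det_\circ A$ is tangible the first equality even holds for \emph{all} lifts, as the tropically optimal term of $\det_\circ A$ is then unique.

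Now let $A$ be non-singular, put $L:=\chi^k_\circ(\adj_\circ A)$ and $R:=(\det_\circ A)^{\odot(k-1)}\odot\chi^{n-k}_\circ(A)$, and take a generic lift $\tilde A$, which is invertible because $v(\det\tilde A)=\nu(\det_\circ A)$ is tangible. Applying the transfer lemma to the expression $e_k(\adj X)$ and invoking the classical identity gives $L\models v((\det\tilde A)^{k-1}e_{n-k}(\tilde A))^\un=\nu(R)^\un$. If $R$ is tangible (equivalently $\chi^{n-k}_\circ(A)$ is, since $\det_\circ A$ is) then $R=\nu(R)^\un$ and $L\models R$; if $R=\ep$ then $e_{n-k}(\tilde A)=0$, hence $e_k(\adj\tilde A)=0$ and $L\models\ep=R$. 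The only remaining case is $R=\nu(R)^\du$ ghost, where I must upgrade $L\models\nu(R)^\un$ to $L\models\nu(R)^\du$, i.e.\ show that $L$ is itself ghost — and this is the crux. If $L$ were tangible it would equal $\nu(R)^\un$, so the relation $L\models v(e_k(\adj\tilde A))^\un$ holding for \emph{every} lift would force $v(e_k(\adj\tilde A))=\nu(R)$, hence (using the classical identity and $v(\det\tilde A)=\nu(\det_\circ A)$) $v(e_{n-k}(\tilde A))=\nu(\chi^{n-k}_\circ(A))$, for every lift. But the ghostness of $\chi^{n-k}_\circ(A)$ produces a lift where this fails: if two distinct permutation-monomials of $e_{n-k}$ attain the tropical optimum I choose the $u_{ij}$ on the (nonempty, since non-constant) hypersurface over $\mathbb{C}$ killing the corresponding leading coefficient; if the optimum is attained by a unique monomial that is ghost only because it uses a ghost entry $A_{pq}$, I take a lift with $v(\tilde A_{pq})$ sufficiently small. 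In either case $v(e_{n-k}(\tilde A))<\nu(\chi^{n-k}_\circ(A))$, a contradiction; hence $L$ is ghost and $L\models R$.

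I expect the transfer lemma, and in particular the correct handling of ghost \emph{entries} of $A$ inside it — deflating such entries and verifying that the cancelling lift keeps $\det_\circ A$ tangible (which holds because no tropically optimal term of $\det_\circ A$ can run through a ghost entry) — to be the main technical obstacle; the classical identity and the generic-lift estimates are routine.
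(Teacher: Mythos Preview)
Your argument is correct, but it follows a genuinely different route from the paper's.

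The paper works entirely at the level of formal supertropical polynomials in a matrix of indeterminates $V$: it sets $\alpha=\chi^k_\circ(\adj_\circ V)$ and $\beta=(\det_\circ V)^{\odot(k-1)}\odot\chi^{n-k}_\circ(V)$, and compares their monomials. The classical identity $e_k(\adj X)=(\det X)^{k-1}e_{n-k}(X)$ is invoked only once, to show that any monomial occurring with a \emph{tangible} coefficient in one of $\alpha,\beta$ must occur (with some coefficient) in the other, because the total multiplicity of each monomial in the two unsimplified classical expansions has the same parity. From this the paper extracts $\alpha(A)=\beta(A)\oplus u$ for some $u$, and a short parity/uniqueness argument (using non-singularity of $A$) rules out the bad case where $u$ is tangible and dominates $\beta(A)$.

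You instead lift $A$ to a matrix $\tilde A$ over a valued field with value group $\G$, apply the classical identity to $\tilde A$, and transport the result back via a transfer lemma $E^\circ(A)\models v(E(\tilde A))^{\un}$. The tangible and $\ep$ cases of $R$ then fall out immediately from a generic lift; the ghost case requires you to manufacture a non-generic lift on which $v(e_{n-k}(\tilde A))$ drops, exploiting either a cancelling choice of leading coefficients or a deflated ghost entry.

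Both arguments ultimately rest on the same classical identity, but they organize the tropical bookkeeping differently. The paper's monomial-matching proof is shorter and entirely elementary---no valued fields, no Hahn series, no transfer lemma---and its Claim~\ref{clcl3} (uniqueness of the optimal $(\sigma_1,\dots,\sigma_{k-1},J,\tau)$ when $A$ is non-singular) does in one stroke what your ghost-case analysis accomplishes by constructing special lifts. Your approach, on the other hand, is more conceptual and would adapt readily to other polynomial identities: once the transfer lemma is in place, any classical identity with $\pm1$ coefficients yields a ghost-surpassing statement, with the only nontrivial work being the ghost-case upgrade. The price is the heavier machinery and the somewhat delicate verification that the ``bad'' lifts in the ghost case remain lifts in your sense and keep $\det\tilde A$ invertible---points you do address, but which make the write-up longer than the paper's four claims.
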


Actually, Niv formulates this conjecture in a slightly different but equivalent way.
If $A$ is tropically non-singular, then its \textit{pseudoinverse} is defined as
$A^\nabla=(\det A)^{\odot(-1)}\odot (\adj_\circ A)$. Multiplying both sides of equality in Conjecture~\ref{thrConjNiv}
by $(\det A)^{\odot(1-k)}$, one gets $(\det_\circ A)\odot \chi^k_\circ(A^\nabla)\models \chi^{n-k}_\circ(A)$,
which is exactly the formulation given by Niv. To prove Conjecture~\ref{thrConjNiv}, we consider the $n\times n$ matrix
$V$ consisting of variables $(v_{ij})$, and we define polynomials $\alpha,\beta\in\S[V]$ as
$$\alpha=\chi^k_\circ(\operatorname{adj}_\circ V),\,\,\,\,\,\beta=\left(\operatorname{det}_\circ V \right)^{\odot(k-1)}\odot  \chi^{n-k}_\circ(V).$$
Note that any coefficient of $\alpha$ and $\beta$ is either $0^{(0)}$ or $0^{(1)}$; define $\gamma\in\S[V]$
as the supertropical sum of those monomials that appear in $\beta$ with tangible coefficients.

\begin{claim}\label{clcl3}
If $A\in\S^{n\times n}$ is a non-singular matrix, then $\beta(A)=\gamma(A)$.
\end{claim}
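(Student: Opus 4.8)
\emph{Plan.} Write $\beta=\gamma\oplus\delta$, where $\delta$ is the supertropical sum of those monomials of $\beta$ whose coefficient is the ghost $0^\du$ (so the monomials of $\beta$ are partitioned between $\gamma$ and $\delta$ according to whether their coefficient is tangible), and hence $\beta(A)=\gamma(A)\oplus\delta(A)$ for every $A$. If $\beta(A)=\varepsilon$ then every monomial of $\beta$ vanishes at $A$, hence $\gamma(A)=\varepsilon$ as well and we are done; so assume $\beta(A)\neq\varepsilon$ and put $w=\nu(\beta(A))\in\G$. I claim it is enough to prove that every monomial $m$ of $\beta$ with $\nu(m(A))=w$ has a tangible coefficient; call this assertion $(\star)$. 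Granting $(\star)$: since $\beta(A)\neq\varepsilon$, some monomial of $\beta$ attains the value $w$ at $A$ and, by $(\star)$, lies in $\gamma$, so $\nu(\gamma(A))=w$; on the other hand no monomial of $\delta$ has a tangible coefficient, so by $(\star)$ none of them attains the value $w$ at $A$, whence $\nu(\delta(A))<w$; therefore $\beta(A)=\gamma(A)\oplus\delta(A)=\gamma(A)$.

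To establish $(\star)$ I would first rephrase ``tangible coefficient'' combinatorially. Expanding $\beta=(\det_\circ V)^{\odot(k-1)}\odot\chi^{n-k}_\circ(V)$, each monomial $m$ of $\beta$ is produced by some tuple $(\sigma_1,\dots,\sigma_{k-1},S,\tau)$, where $\sigma_1,\dots,\sigma_{k-1}\in S_n$, $S\subseteq\{1,\dots,n\}$ with $\abs S=n-k$, and $\tau$ is a permutation of $S$, via
$$m=\Bigl(\bigodot_{j=1}^{k-1}\bigodot_{l=1}^{n}v_{l\sigma_j(l)}\Bigr)\odot\Bigl(\bigodot_{i\in S}v_{i\tau(i)}\Bigr).$$
Since every coefficient appearing in $\det_\circ V$ and in $\chi^{n-k}_\circ(V)$ equals the tangible $0^\un$, the coefficient of $m$ in $\beta$ is $0^\un$ exactly when there is a \emph{unique} such tuple producing $m$, and is $0^\du$ otherwise. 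Evaluating at $A$ and using that $\nu$ carries $\odot$ to the group operation $\ast$ of $\G$, the value $\nu(m(A))$ is the $\ast$-product of the $k-1$ quantities $\nu(a_{1\sigma_j(1)}\odot\dots\odot a_{n\sigma_j(n)})$ with $\nu(\bigodot_{i\in S}a_{i\tau(i)})$. Here $\nu(a_{1\sigma_j(1)}\odot\dots\odot a_{n\sigma_j(n)})\le\nu(\det_\circ A)$ for each $j$, being the value of a single term of $\det_\circ A$, while $\nu(\bigodot_{i\in S}a_{i\tau(i)})$ is at most the value of the principal minor of $A$ on the rows and columns of $S$, hence at most $\nu(\chi^{n-k}_\circ A)$; combining these bounds, $\nu(m(A))\le w$, and equality forces equality in each of the individual bounds.

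Finally I would use non-singularity. Because $A$ is non-singular, $\det_\circ A$ is tangible, so there is exactly one permutation $\sigma_0\in S_n$ with $\nu(a_{1\sigma_0(1)}\odot\dots\odot a_{n\sigma_0(n)})=\nu(\det_\circ A)$ --- a second such permutation would make $\det_\circ A$ a ghost. Now take a monomial $m$ of $\beta$ with $\nu(m(A))=w$ and any tuple $(\sigma_1,\dots,\sigma_{k-1},S,\tau)$ producing it. By the previous paragraph, $\nu(a_{1\sigma_j(1)}\odot\dots\odot a_{n\sigma_j(n)})=\nu(\det_\circ A)$ for every $j$, hence $\sigma_j=\sigma_0$. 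As this holds for every tuple producing $m$, each of them has $\sigma_1=\dots=\sigma_{k-1}=\sigma_0$; cancelling the common monomial $\bigl(\bigodot_{l=1}^{n}v_{l\sigma_0(l)}\bigr)^{\odot(k-1)}$ from $m$ shows that $\bigodot_{i\in S}v_{i\tau(i)}$ is determined by $m$, and this monomial determines $S$ (its set of rows) and then $\tau$. Thus the tuple producing $m$ is unique, its coefficient in $\beta$ is tangible, and $(\star)$ follows.

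I expect the genuinely load-bearing step to be the last one: non-singularity isolates a single optimal permutation $\sigma_0$, which rigidifies the factorization of every top-value monomial of $\beta$ and so forces its coefficient to be tangible. Everything else is bookkeeping --- the only nuisances being the case $\beta(A)=\varepsilon$, handled at the start, and, should one wish to include $k=0$, the separate observation that $\beta$ is then constantly $0^\un$.
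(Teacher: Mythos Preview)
Your argument is correct and is essentially the paper's own proof: both expand $\beta$ over tuples $(\sigma_1,\dots,\sigma_{k-1},S,\tau)$, observe that a monomial has ghost coefficient precisely when two distinct tuples produce it, and then use non-singularity of $A$ to pin down a unique optimal permutation $\sigma_0$, forcing $\sigma_1=\cdots=\sigma_{k-1}=\sigma_0$ for any tuple realising the top $\nu$-value and hence determining $(S,\tau)$ as well. Your write-up is a bit more explicit about the decomposition $\beta=\gamma\oplus\delta$ and the reduction to $(\star)$, but the substance is identical.
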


\begin{proof}
By definition, $\beta$ is supertropical sum of monomials $m_\mu=m^1_\mu\odot m^2_\mu$, where
$$m^1_\mu=\left(\bigodot_{i=1}^n v_{i\sigma_1(i)}\right)\odot\ldots\odot\left(\bigodot_{i=1}^n v_{i\sigma_{k-1}(i)}\right),
\,\,\,\,\,m^2_\mu=\bigodot_{j\in J} v_{j\tau(j)},$$
over all tuples $\mu=(\sigma_1,\ldots,\sigma_{k-1},J,\tau)$ such that $\sigma_1,\ldots,\sigma_{k-1}\in S_n$,
a subset $J\subset\{1,\ldots,n\}$ has $n-k$ elements, and $\tau$ is a permutation of $J$.

Assume $\beta(A)\neq\varepsilon$. Suppose that there exist distinct tuples $\mu$ and
$\mu'=(\sigma'_1,\ldots,\sigma'_{k-1},J',\tau')$ satisfying $m_\mu= m_{\mu'}$ and $\beta(A)\approx_\nu m_\mu(A)$.
Then, since there is a unique permutation $\sigma$ such that $\bigodot_{i=1}^n a_{i\sigma(i)}\approx_\nu\det_\circ A$,
we have that all $\sigma_t$ and all $\sigma_{t}'$ are equal to $\sigma$. In particular, we have $m^1_{\mu}=m^1_{\mu'}$,
which implies $m^2_{\mu}=m^2_{\mu'}$. Note that the latter condition in turn implies $J=J'$ and $\tau=\tau'$, a contradiction.

Therefore, either $\beta(A)=\varepsilon$ or $\nu(m(A))\neq\nu(\beta(A))$ holds for any monomial $m$
that appears in $\beta$ with ghost coefficient. This means that we can remove all these monomials
from $\beta$ without changing the value of $\beta(A)$.
\end{proof}

\begin{claim}\label{clcl1}
If a monomial $v_{11}^{\odot k_{11}}\odot\ldots\odot v_{nn}^{\odot k_{nn}}$ appears with a tangible coefficient in either $\alpha$ or $\beta$,
then it appears in both $\alpha$ and $\beta$ with coefficients different from $\ep$.
\end{claim}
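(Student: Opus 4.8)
The plan is to observe that the claim depends only on the \emph{diagonal part} of $\alpha$ and $\beta$ --- the sum of those terms whose monomial involves only the diagonal variables $v_{11},\dots,v_{nn}$ --- and that this part is isolated by the substitution homomorphism $\varphi\colon\S[V]\to\S[V]$ that sends each off-diagonal variable $v_{ij}$ with $i\ne j$ to $\ep$ and fixes each $v_{ii}$. Since $\ep$ absorbs $\odot$ and is neutral for $\oplus$, and since distinct monomials never merge, $\varphi(f)$ is exactly the sum of the terms of $f$ supported on a diagonal monomial, with coefficients unchanged. Hence it suffices to prove $\varphi(\alpha)=\varphi(\beta)$ and that every coefficient occurring there is tangible; this is stronger than the claim, which only asks that a monomial with a tangible coefficient on one side have a non-$\ep$ coefficient on the other.

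What makes this manageable is that $\varphi$ sends $V$ to the diagonal matrix $\operatorname{diag}(v_{11},\dots,v_{nn})$, and that for any submatrix $V[I,J]$ with $\abs{I}=\abs{J}$, applying $\varphi$ to $\det_\circ V[I,J]=\bigoplus_{\sigma}\bigodot_{i\in I}v_{i\sigma(i)}$ (the sum over bijections $\sigma\colon I\to J$) annihilates every summand except those coming from a $\sigma$ fixing every index; and such a $\sigma$ exists only when $I=J$, where it is the identity and contributes $\bigodot_{i\in I}v_{ii}$ with coefficient $0^{(1)}$. Thus $\varphi(\det_\circ V)=v_{11}\odot\cdots\odot v_{nn}$; $\varphi$ of the principal minor of $V$ on an index set $R$ equals $\bigodot_{i\in R}v_{ii}$; and $\varphi$ of any cofactor of $V$ that deletes row $i$ and column $j$ with $i\ne j$ equals $\ep$. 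In each case the surviving coefficient is genuinely $0^{(1)}$, not ghosted, since the identity bijection is the only contributor.

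I would then compute both sides using that $\varphi$, being a homomorphism, commutes with $\det_\circ$, with $\chi^k_\circ$, and with entrywise formation of $\operatorname{adj}_\circ$. For $\beta$: $\varphi(\beta)=\varphi(\det_\circ V)^{\odot(k-1)}\odot\varphi(\chi^{n-k}_\circ V)$, and since $\varphi(\chi^{n-k}_\circ V)=\bigoplus_{\abs{S}=n-k}\bigodot_{i\in S}v_{ii}$, distributing yields $\varphi(\beta)=\bigoplus_{\abs{S}=n-k}\bigl(\bigodot_{i\in S}v_{ii}^{\odot k}\odot\bigodot_{i\notin S}v_{ii}^{\odot(k-1)}\bigr)$, a supertropical sum of pairwise distinct monomials each with coefficient $0^{(1)}$. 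For $\alpha$: the matrix $\varphi(\operatorname{adj}_\circ V)$ is diagonal, with $(i,i)$ entry $\varphi$ of the principal minor of $V$ on $\{1,\dots,n\}\setminus\{i\}$, namely $\bigodot_{j\ne i}v_{jj}$, and with off-diagonal entries $\varphi$ of cofactors deleting a row and column of distinct index, hence $\ep$. Since $\chi^k_\circ$ of a diagonal matrix $M$ equals $\bigoplus_{\abs{T}=k}\bigodot_{i\in T}M_{ii}$, we obtain $\varphi(\alpha)=\bigoplus_{\abs{T}=k}\bigodot_{i\in T}\bigodot_{j\ne i}v_{jj}$, again a supertropical sum of pairwise distinct monomials with coefficient $0^{(1)}$; here $v_{jj}$ carries exponent $k$ if $j\notin T$ and $k-1$ if $j\in T$.

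Finally, the parametrisations match under complementation $S=\{1,\dots,n\}\setminus T$: in the $\beta$-term indexed by $S$, $v_{jj}$ has exponent $k$ for $j\in S$ and $k-1$ otherwise, which is exactly the exponent in the $\alpha$-term indexed by $T=\{1,\dots,n\}\setminus S$. Hence $\varphi(\alpha)=\varphi(\beta)$, with all coefficients equal to $0^{(1)}$, and the claim follows. I expect the point needing the most care to be the vanishing of the off-diagonal entries of $\varphi(\operatorname{adj}_\circ V)$ --- equivalently, that a cofactor deleting row $i$ and column $j$ with $i\ne j$ has no diagonal monomial, which is what forces only the identity permutation to contribute to the diagonal part of each principal minor of $\operatorname{adj}_\circ V$ --- together with checking that no two contributions ever coincide, so that the coefficients remain tangible rather than collapsing to ghosts; both reduce to the uniqueness of the identity bijection wherever it appears.
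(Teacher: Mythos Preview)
Your argument is internally correct but it proves the wrong statement: you have read the monomial $v_{11}^{\odot k_{11}}\odot\ldots\odot v_{nn}^{\odot k_{nn}}$ as a monomial in the $n$ \emph{diagonal} variables $v_{11},v_{22},\dots,v_{nn}$, whereas in the paper the double indices run over all pairs, so the claim concerns an arbitrary monomial in the $n^2$ variables $v_{ij}$. This reading is confirmed by the paper's own proof (it works in $\mathbb{C}[x_{11},\ldots,x_{nn}]$, the polynomial ring in all entries) and, more importantly, by the way the claim is used afterwards: Claim~\ref{clcl11} picks a monomial $m$ in $\alpha$ with $m(A)=\alpha(A)$ for a general non-singular $A$, and Claim~\ref{clcl2} needs every tangible monomial of $\beta$ to occur in $\alpha$. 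Neither application survives if the claim is restricted to diagonal monomials.

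Your specialisation $\varphi$ that kills the off-diagonal variables therefore throws away exactly the cases that matter. The computation you carry out does show $\varphi(\alpha)=\varphi(\beta)$ with all coefficients tangible, which is a pleasant sanity check, but it says nothing about a monomial such as $v_{12}\odot v_{21}\odot(\cdots)$. The paper's argument is quite different and handles all monomials at once: it passes to the classical identity $\chi^k(\adj X)=(-1)^n(\det X)^{k-1}\chi^{n-k}(X)$ over $\mathbb{C}$, observes that the unreduced expansions $\varphi_0,\psi_0$ of the two sides become $\alpha,\beta$ under the tropicalisation $\pm x_{ij}^{k_{ij}}\mapsto 0^{(1)}\odot v_{ij}^{\odot k_{ij}}$, and concludes that any monomial occurring an odd number of times on one side must occur at least once on the other (since the signed counts agree). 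That parity argument is the missing idea; your substitution trick cannot replace it.
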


\begin{proof}
Let $X=(x_{ij})$ be a matrix whose entries are variables of
the polynomial ring $\mathbb{C}[x_{11},\ldots,x_{nn}]$, and
define $\varphi=\chi^k(\operatorname{adj} X)$, 
$\psi=\left(\det X \right)^{k-1} \chi^{n-k}(X)$. Let us get rid
of brackets by distributivity in the standard expressions of
$\varphi$ and $\psi$ and denote the expressions we obtain before
canceling terms by $\varphi_0$ and $\psi_0$, respectively.
Now, if we replace any monomial $\pm x_{11}^{ k_{11}} \ldots x_{nn}^{ k_{nn}}$ by
$0^{(1)}\odot v_{11}^{\odot k_{11}}\odot\ldots\odot v_{nn}^{\odot k_{nn}}$ in
$\varphi_0$ and $\psi_0$, we get $\alpha(V)$ and $\beta(V)$.
Since the equality $\varphi=(-1)^n \psi$ is true for matrices over a field, the total
number of appearances in $\varphi_0$ and $\psi_0$ is even for any monomial.
\end{proof}

\begin{claim}\label{clcl11}
Let $A\in\S^{n\times n}$ be a non-singular matrix. If $\alpha(A)$ is a tangible element, then there is $s\in\S$
such that $\beta(A)=\alpha(A)\oplus s$.
\end{claim}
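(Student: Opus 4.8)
The plan is to reduce the statement to the single inequality $\nu(\beta(A))\ge\nu(\alpha(A))$. This reduction is legitimate: since $\alpha(A)$ is tangible, for every $s\in\S$ the element $\alpha(A)\oplus s$ has $\nu$-value $\ge\nu(\alpha(A))$, and conversely every element with $\nu$-value $\ge\nu(\alpha(A))$ arises as $\alpha(A)\oplus s$ for some $s$; so $\beta(A)=\alpha(A)\oplus s$ is solvable in $s$ exactly when $\nu(\beta(A))\ge\nu(\alpha(A))$. I would prove this inequality by exhibiting a single monomial whose contribution to $\beta(A)$ already realizes the value $\nu(\alpha(A))$.

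First I would unpack the hypothesis. Write $\alpha=\bigoplus_m c_m\odot m$ as a finite supertropical sum over the monomials $m$ in the variables $v_{ij}$ that occur in $\alpha$, each with coefficient $c_m\in\{0^\du,0^\un\}$. Since $\alpha(A)=\bigoplus_m c_m\odot m(A)$ is tangible, hence nonzero, there is among these a unique monomial, say $m$, with $m(A)\ne\ep$ and $\nu(m(A))$ maximal, and moreover $c_m=0^\un$ and $m(A)$ is tangible: indeed, if two of the monomials shared this maximal value, or if the maximal one had coefficient $0^\du$, or if its value at $A$ were a ghost, then $\alpha(A)$ would be a ghost, contrary to hypothesis. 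In particular $\alpha(A)=m(A)$. Hence it suffices to show that this same monomial $m$ occurs in $\beta$ with a coefficient different from $\ep$: then the $m$-term of $\beta(A)$ is nonzero with $\nu$-value $\nu(m(A))=\nu(\alpha(A))$, so $\nu(\beta(A))\ge\nu(\alpha(A))$ as wanted.

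The crux — and the step I expect to require the most care — is this transfer of $m$ from $\alpha$ to $\beta$. I would carry it out through the bookkeeping already set up in the proof of Claim~\ref{clcl1}: the field polynomials $\varphi_0$ and $\psi_0$ obtained from $\varphi=\chi^k(\adj X)$ and $\psi=(\det X)^{k-1}\chi^{n-k}(X)$ by expanding all products by distributivity without cancelling, with the feature that replacing each term of $\varphi_0$ (resp.\ $\psi_0$) by the corresponding supertropical monomial with coefficient $0^\un$ yields $\alpha$ (resp.\ $\beta$). Since $0^\un\oplus 0^\un=0^\du$, a monomial has a tangible coefficient in $\alpha$ precisely when it occurs exactly once in $\varphi_0$; in that case it occurs with coefficient $\pm1$ in $\varphi$, so by the field identity $\varphi=(-1)^n\psi$ it has a nonzero coefficient in $\psi$ as well, hence occurs at least once in $\psi_0$, hence with coefficient $0^\du$ or $0^\un$ in $\beta$. (Claim~\ref{clcl1} records only the diagonal instance of this, but the reasoning does not care which monomial one starts from, and our $m$ need not be diagonal.) Applied to $m$, this yields the required transfer. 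It is precisely here that the hypothesis on $\alpha(A)$ is indispensable: tangibility of $\alpha(A)$ is what guarantees that $m$ occurs \emph{exactly once} — not merely an odd number of times — in $\varphi_0$, and it is this unit multiplicity that survives cancellation and passes across the identity.

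Finally I would assemble the pieces: $m(A)$ is tangible and $m$ occurs in $\beta$ with a non-$\ep$ coefficient, so $\nu(\beta(A))\ge\nu(m(A))=\nu(\alpha(A))$, whence $\beta(A)=\alpha(A)\oplus s$ for a suitable $s\in\S$ by the reduction in the first paragraph. (Should one prefer to avoid the polynomial bookkeeping, an alternative is to lift $A$ entrywise to a matrix over a valued field with value group $\G$, invoke Jacobi's identity relating minors of the adjugate to complementary minors of the matrix, and compare valuations; but the route above stays entirely within the framework already developed here.)
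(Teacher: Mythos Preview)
Your proof is correct and follows essentially the same route as the paper's two-sentence argument: pick a monomial $m$ that appears in $\alpha$ with tangible coefficient and satisfies $m(A)=\alpha(A)$, transfer $m$ to $\beta$ via Claim~\ref{clcl1}, and read off $\beta(A)=\alpha(A)\oplus s$. One small misreading worth flagging: in Claim~\ref{clcl1} the notation $v_{11}^{\odot k_{11}}\odot\ldots\odot v_{nn}^{\odot k_{nn}}$ denotes a general monomial in all $n^2$ variables $v_{ij}$ (the indices run through every pair $(i,j)$, not just the diagonal ones), so that claim already applies to arbitrary monomials and the re-derivation of the transfer you sketch is unnecessary.
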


\begin{proof}
By assumption, there is a monomial $m=v_{11}^{\odot k_{11}}\odot\ldots\odot v_{nn}^{\odot k_{nn}}$ appearing in $\alpha$
with tangible coefficient such that $m(A)=\alpha(A)$. But $m$ appears in $\beta$ as well by Claim~\ref{clcl1}, so the result follows.
\end{proof}

\begin{claim}\label{clcl2}
There is a polynomial $\rho\in\S[V]$ such that $\alpha=\gamma\oplus\rho$.
\end{claim}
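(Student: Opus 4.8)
The plan is to build $\rho$ by prescribing its coefficients one monic monomial at a time. Since $\alpha$, $\gamma$ and the sought polynomial $\rho$ all live in $\S[V]$ and supertropical addition of polynomials is performed coefficient-wise, it suffices to find, for every monic monomial $m=\bigodot_{i,j}v_{ij}^{\odot k_{ij}}$, an element $c_\rho(m)\in\S$ with $c_\gamma(m)\oplus c_\rho(m)=c_\alpha(m)$, where $c_p(m)$ denotes the coefficient of $m$ in a polynomial $p\in\S[V]$. The coefficients in play are severely constrained: every coefficient of $\alpha$ or $\beta$ equals $0^\du$ or $0^\un$ (and equals $\ep$ exactly when the monomial is absent), while $\gamma$ keeps precisely the monomials of $\beta$ carrying the tangible coefficient $0^\un$; hence $c_\alpha(m)\in\{\ep,0^\du,0^\un\}$ and $c_\gamma(m)\in\{\ep,0^\un\}$ for every $m$.

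The single nontrivial ingredient I would invoke is Claim~\ref{clcl1}. Whenever $c_\gamma(m)=0^\un$, the monomial $m$ occurs in $\beta$ with a tangible coefficient, so Claim~\ref{clcl1} forces it to occur in $\alpha$ with a coefficient distinct from $\ep$; that is, $c_\gamma(m)=0^\un$ implies $c_\alpha(m)\in\{0^\du,0^\un\}$. This is exactly what makes the construction possible: no monomial of $\gamma$ is missing from $\alpha$, so $\rho$ never has to \emph{manufacture} a monomial that is not already present on the left-hand side. Granting this, I would set $c_\rho(m)=\ep$ when $c_\gamma(m)=0^\un$ and $c_\alpha(m)=0^\un$, and $c_\rho(m)=c_\alpha(m)$ otherwise, and let $\rho$ be the resulting polynomial; it genuinely lies in $\S[V]$ because its support is contained in the finite support of $\alpha$.

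Verifying $\alpha=\gamma\oplus\rho$ then amounts to a short check inside this three-element coefficient set, using $\ep\oplus s=s$ together with $0^\un\oplus 0^\du=0^\du=0^\un\oplus 0^\un$: if $c_\gamma(m)=\ep$ then $c_\gamma(m)\oplus c_\rho(m)=c_\alpha(m)$; if $c_\gamma(m)=0^\un$ and $c_\alpha(m)=0^\un$ then $0^\un\oplus\ep=0^\un=c_\alpha(m)$; and if $c_\gamma(m)=0^\un$ and $c_\alpha(m)=0^\du$ then $0^\un\oplus 0^\du=0^\du=c_\alpha(m)$. I do not anticipate a genuine obstacle: the mathematical substance is entirely carried by Claim~\ref{clcl1}, and the rest is bookkeeping. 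The one point that does require care — and the reason one cannot simply take $\rho=\alpha$ — is that $0^\un\oplus 0^\un=0^\du$, so reusing $\alpha$ verbatim would turn every monomial that is tangible in both $\alpha$ and $\gamma$ into a ghost; the first case in the definition of $c_\rho(m)$ is there precisely to block this.
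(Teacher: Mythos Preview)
Your proof is correct and follows the same route as the paper: both invoke Claim~\ref{clcl1} to see that every monomial of $\gamma$ already appears in $\alpha$, after which the existence of $\rho$ is coefficient-wise bookkeeping. The paper compresses this into two sentences and leaves the explicit construction of $\rho$ implicit; your careful handling of the case $c_\gamma(m)=c_\alpha(m)=0^\un$ (where one must set $c_\rho(m)=\ep$ rather than $0^\un$, since $0^\un\oplus 0^\un=0^\du$) fills in a detail the paper omits.
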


\begin{proof}
By definition of $\gamma$, it is the sum of monomials that appear in $\beta$ with tangible coefficients.
All these monomials appear in $\alpha$ as well by Claim~\ref{clcl1}.
\end{proof}

%
%

\begin{proof}[Proof of Conjecture~\ref{thrConjNiv}]
By Claims~\ref{clcl3} and~\ref{clcl2}, there is $u\in\S$ such that $\alpha(A)=\beta(A)\oplus u$.
Claim~\ref{clcl11} rules out the case when $u$ is tangible and greater than $\beta(A)$.
It remains to note that $\alpha(A)$ and $\beta(A)$ are, respectively, the left-hand and
right-hand sides of the assertion of Conjecture~\ref{thrConjNiv}.
\end{proof}

\end{document}